\newtheorem{theorem}{Theorem}[section]
\newtheorem{lemma}[theorem]{Lemma}
\newtheorem{proposition}[theorem]{Proposition}
\newtheorem{corollary}[theorem]{Corollary}
\theoremstyle{definition}
\newtheorem{definition}[theorem]{Definition}
\newtheorem{remark}[theorem]{Remark}
\numberwithin{equation}{section}
\newcommand\R {{\mathbb R}}
\newcommand\T {{\mathbb T}} 
\newcommand\K {{\mathbb K}}
\DeclareMathOperator{\area}{{\rm area}} 
\DeclareMathOperator{\vol}{{\rm vol}} 
\DeclareMathOperator{\sys}{{\rm sys}}
\DeclareMathOperator{\arcsinh}{arcsinh}
\newcommand{\RP}{{\mathbb R\mathbb P}} 
\newcommand{\KK}{K}
\long\def\forget#1\forgotten{} %
\numberwithin{equation}{section}
\title{Nonpositively curved surfaces are Loewner}
\author[M. Katz]{Mikhail G. Katz} \address{Department of
  Mathematics, Bar Ilan University, Ramat Gan 5290002 Israel}
\email{katzmik@math.biu.ac.il}
\author[S.\;Sabourau]{St\'ephane Sabourau}
\address{\parbox{\linewidth}{Univ Paris Est Creteil, CNRS, LAMA, F-94010 Creteil, France \\
Univ Gustave Eiffel, LAMA, F-77447 Marne-la-Vall\'ee, France}}
\email{stephane.sabourau@u-pec.fr}
\subjclass[2020]
{Primary 53C20; Secondary 53C23}
\begin{document}

\maketitle

\thispagestyle{empty}

{\centering
\emph{Dedicated to Eugenio Calabi's memory.}
\par}

\begin{abstract}
We show that every closed nonpositively curved surface satisfies
Loewner's systolic inequality.
%For this purpose, we prove the existence of a disk of radius half the systole with suitably large area.
%more area than a Euclidean disk of the same radius when the Euler characteristic is negative.
%More precisely, we show that every closed nonpositively curved surface with negative Euler characteristic contains a disk of radius half the systole with area larger than the smallest area of a torus with the same systole.
The proof relies on a combination of the Gauss--Bonnet formula with an
averaging argument using the invariance of the Liouville measure under
the geodesic flow.  This enables us to find a disk with large total
curvature around its center yielding a large area.
\end{abstract}

%\doublespacing

%\huge

\keywords{systole, systolic inequality, nonpositively curved surface,
  Liouville measure}

%\tableofcontents

%\today

\section{Introduction}

The systole of a closed nonsimply connected surface~$M$ endowed with a Riemannian metric, denoted by~$\sys(M)$, is defined as the length of the shortest noncontractible loop of~$M$.
It is attained by the length of a noncontractible closed geodesic.
The \emph{systolic area} of~$M$ is defined as
\[
\sigma(M) = \frac{\area(M)}{\sys(M)^2}.
\]
We will say that $M$ is \emph{Loewner} if its systolic area satisfies
\[
\sigma(M) \geq \frac{\sqrt{3}}{2} \approx 0.866.
\]
The first systolic inequality, due to Loewner, asserts that every metric on the torus~$\T^2$ is Loewner; see~\cite{pu}.
By~\cite{KS06pams}, all surfaces with a Riemannian metric in a hyperelliptic conformal class are Loewner.
In particular, every genus two surface is Loewner since every conformal class in genus two is hyperelliptic.
By~\cite{KS05}, surfaces of genus at least~$20$ are Loewner.
Pushing this technique further, Li and Su announce that this still holds true for  surfaces of genus~$18$ and $19$, and even for surfaces of genus~$\geq 11$ when one restricts to nonpositively curved metrics; see~\cite{LS}.
For the other cases
%, namely for surfaces of genus~$g$ with $3 \leq g \leq 18$,
the problem is still open.

In this article, we resolve this problem for nonpositively curved surfaces of any genus (as well as for nonorientable surfaces) relying on a new approach.

\begin{theorem} \label{theo:main}
Every closed nonpositively curved surface is Loewner.
\end{theorem}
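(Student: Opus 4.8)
The plan is to rescale so that $\sys(M)=1$ and to prove $\area(M)\ge\frac{\sqrt3}{2}$. I would first dispose of the case $\chi(M)=0$: by Gauss--Bonnet $\int_M K\,dA=2\pi\chi(M)=0$, so with $K\le 0$ the metric is flat and $M$ is a flat torus or a flat Klein bottle, and both are Loewner by the classical inequalities of Loewner~\cite{pu} and Bavard (for the flat Klein bottle one can also check directly that $\sigma(M)\ge 1$). Thus from now on $\chi(M)\le -1$, and Gauss--Bonnet provides a definite amount of curvature: $\int_M|K|\,dA=2\pi|\chi(M)|\ge 2\pi$.

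The heart of the matter is a sharp lower bound for the area of geodesic balls of radius $\frac12$. Since $M$ is closed and nonpositively curved, its universal cover is a Cartan--Hadamard plane, so every nonconstant geodesic loop of $M$ is noncontractible and hence of length $\ge\sys(M)=1$; consequently (as $d(\tilde p,g\tilde p)\ge 1$ for each nontrivial deck transformation $g$) the ball $B(p,\rho)$ is an embedded topological disk for every $p\in M$ and every $\rho<\frac12$. Writing the metric in geodesic polar coordinates $dt^2+J(t,\theta)^2\,d\theta^2$ about $p$, the Jacobi equation $\partial_t^2J=-KJ$ together with $J(0,\theta)=0$, $\partial_tJ(0,\theta)=1$ and $K\le 0$ forces $J(t,\theta)\ge t$; integrating the first variation of length of the circles $\partial B(p,t)$ then yields, for $\rho\le\frac12$,
\[
\area\bigl(B(p,\rho)\bigr)=\pi\rho^2+\int_0^{\rho}\Bigl(-\!\int_{B(p,t)}\!K\,dA\Bigr)(\rho-t)\,dt ,
\]
with nonnegative integrand; in particular $\area(B(p,t))\ge\pi t^2$. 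The key point visible here is that curvature near the circle $\partial B(p,\frac12)$ is weighted by the small factor $(\frac12-t)$ and hence adds little to the area of that ball, which is why one should \emph{average over all centers} rather than single out the most curved ball.

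Next I would average the curvature excess $\area\bigl(B(p,\tfrac12)\bigr)-\tfrac\pi4=\int_0^{1/2}\bigl(-\int_{B(p,t)}K\,dA\bigr)(\tfrac12-t)\,dt$ over $p\in M$. Exchanging the order of integration — equivalently, integrating over the unit tangent bundle $UM$ and using the invariance of the Liouville measure under the geodesic flow to slide the base point along each geodesic — turns $\int_M\bigl(-\int_{B(p,t)}K\,dA\bigr)\,dA(p)$ into $\int_M|K(q)|\,\area\bigl(B(q,t)\bigr)\,dA(q)\ge\pi t^2\int_M|K|\,dA=2\pi^2|\chi(M)|\,t^2$. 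Integrating this against $(\tfrac12-t)$ over $[0,\tfrac12]$ and using $\int_0^{1/2}t^2(\tfrac12-t)\,dt=\tfrac1{192}$, I obtain a center $p_0\in M$ with $\area\bigl(B(p_0,\tfrac12)\bigr)\ge\tfrac\pi4+\tfrac{\pi^2|\chi(M)|}{96\,\area(M)}$. Since $B(p_0,\tfrac12)\subseteq M$, this yields $\area(M)^2-\tfrac\pi4\area(M)-\tfrac{\pi^2|\chi(M)|}{96}\ge 0$, hence
\[
\area(M)\ge\frac\pi8\Bigl(1+\sqrt{1+\tfrac23|\chi(M)|}\,\Bigr)\ge\frac\pi8\Bigl(1+\sqrt{\tfrac53}\,\Bigr)\approx 0.8997>\frac{\sqrt3}{2},
\]
using $|\chi(M)|\ge 1$. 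As $\sigma$ is scale invariant this proves the theorem, and no extra work is needed for nonorientable $M$ since polar coordinates, Gauss--Bonnet and the Fubini/Liouville averaging are all insensitive to orientability.

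I expect the main obstacle to be partly conceptual, partly technical. Conceptually, the delicate point is the one flagged above: a bound using only the most curved ball fails because its curvature may sit near its boundary, so one must weight by $(\tfrac12-t)$ and average over all centers, and then check that the resulting constant lies \emph{strictly} above $\tfrac{\sqrt3}{2}$ already in the borderline case $|\chi(M)|=1$. Technically, one must treat the borderline radius $\rho=\tfrac12$ with care — the closed ball of that radius need not be embedded, so one works with $\rho<\tfrac12$ and passes to the limit — and justify the Jacobi-field and first-variation identities. Finally, a real limitation, rather than a gap, is that the method gives nothing when $\chi(M)=0$: the flat torus and flat Klein bottle must be imported from the classical theory.
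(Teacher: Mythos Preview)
Your proof is correct and follows the same overall architecture as the paper: the exact area formula $\area B_p(\rho)=\pi\rho^2-\int_0^\rho(\rho-t)\int_{B_p(t)}K\,dA\,dt$ for balls of radius below half the systole (this is Proposition~\ref{prop:theta} together with the Remark following it), an averaging over centers to locate a ball of radius $\tfrac12\sys(M)$ with area at least $\frac\pi4+\frac{\pi^2|\chi(M)|}{96\,\area(M)}$, and the resulting quadratic inequality yielding $\sigma(M)\ge\frac\pi8\bigl(1+\sqrt{1-\tfrac23\chi(M)}\bigr)$, which matches the paper's estimate~\eqref{eq:sqrt} exactly, including the separate treatment of $\chi(M)=0$.

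The one substantive difference is in how the averaging is carried out. The paper introduces the auxiliary function $F_r$ on tangent disks, computes $\int_M F_r$ exactly via the invariance of the Liouville measure under the geodesic flow (Lemma~\ref{lem:F}), and then compares $F_r\ge G_r$ pointwise via Rauch (Lemma~\ref{lem:FB}). You instead apply Fubini directly on $M\times M$, rewriting $\int_M\int_{B_p(t)}|K|\,dA\,dA(p)=\int_M|K(q)|\,\area B_q(t)\,dA(q)$, and then invoke the ball-area lower bound $\area B_q(t)\ge\pi t^2$. Your route is more elementary --- it bypasses the unit tangent bundle entirely --- and lands on the identical inequality $\int_M G_r\le 2\pi\chi(M)\cdot\frac{\pi r^4}{12}$; so the Liouville machinery is not essential here. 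Your parenthetical mention of Liouville as an ``equivalent'' justification is harmless but unnecessary: the plain Fubini argument you actually give already does the job.
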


Actually, we prove a stronger statement; see Proposition~\ref{prop:end}.
Let $M$ be a closed nonpositively curved surface.
For every $r \in [0,\frac{1}{2} \sys(M)]$, there is a ball~$B(r) \subseteq M$ of radius~$r$ with
\[
\area B(r) \geq \pi r^2 - \frac{2\pi \chi(M)}{\area(M)} \cdot \frac{\pi}{12} r^4.
\]
In particular, for $r=\frac{1}{2} \sys(M)$, we have
\[
\area B(\tfrac{1}{2} \sys(M)) \geq \left( \frac{\pi}{4} - \frac{\pi^2 \chi(M)}{96} \cdot \frac{\sys(M)^2}{\area(M)} \right) \sys(M)^2.
\]
Now, if $M$ is not a torus, we can show (see Corollary~\ref{coro}) that its systolic area is at least 
\[
\frac{\pi}{8} \left( 1 + \sqrt{\tfrac{7}{3}} \right) \approx 0.992
\]
which is enough to conclude. \\

%Actually, we prove a stronger statement (see Corollary~\ref{coro}):  every closed nonpositively curved surface other than a torus has a systolic area at least
%\[
%\frac{\pi}{8} \left( 1 + \sqrt{\tfrac{7}{3}} \right) \approx 0.992.
%\]

Our strategy differs from previous works.
Here is the rough idea.
Suppose that $M$ is a genus~$g$ surface which is not Loewner.
Normalize the metric so that $\sys(M)=1$.
Since the metric is nonpositively curved, every disk of radius~$\frac{1}{4}$ has area at least~$\frac{\pi}{16}$, which represents at least~$\frac{\pi}{16} \slash \frac{\sqrt{3}}{2} \approx 22.6\%$ of the total area.
Now, by an averaging argument using the invariance of the Liouville measure under the geodesic flow, one should be able to find a disk~$D$ of radius~$\frac{1}{4}$ with at least~$22.6\%$ of the total curvature. %, that is $0.226 \times 4\pi(g-1)$.
The disk~$D_+$ of radius~$\frac{1}{2}$ centered at the same point has a lot of (negative) curvature around its center, namely in~$D$.
This should force the disk~$D_+$ to have a lot of area contradicting the $\frac{\sqrt{3}}{2}$-bound on the surface area.
At implementation level, the existence of a curvature-rich disk relies on an integral-geometric formula relating the weighted average of the curvature~$K$ on tangent disks in~$TM$ with the Euler characteristic and a comparison result between this weighted average and another weighted average of~$K$ on metric disks in~$M$.
The area lower bound on this curvature-rich disk follows from an expression relating the area of this disk with the previous weighted average of~$K$ on the same disk.

Recent progress in systolic geometry includes \cite{Go24} and
\cite{Ka24a}.

\forget
Our strategy differs from previous works.
Here is the rough idea.
Suppose that $M$ is a genus~$g$ surface which is not Loewner.
Normalize the metric so that $\sys(M)=1$.
Since the metric is nonpositively curved, every disk of radius~$\frac{1}{4}$ has area at least~$\frac{\pi}{16}$, which represents at least~$\frac{\pi}{16} \slash \frac{\sqrt{3}}{2} \approx 22.6\%$ of the total area.
Now, by an averaging argument using the invariance of the Liouville measure under the geodesic flow, one should be able to find a disk~$D$ of radius~$\frac{1}{4}$ with at least~$22.6\%$ of the total curvature, that is $0.226 \times 4\pi(g-1)$.
The disk~$D_+$ of radius~$\frac{1}{2}$ centered at the same point has a lot of (negative) curvature around its center, namely in~$D$.
This should force the disk~$D_+$ to have a lot of area.
Specifically, it should have more area than the flat disk of radius~$\frac{1}{2}$ with a conical singularity at distance~$\frac{1}{4}$ from its center concentrating all the curvature.
This leads to a contradiction since the area of this extremal disk is equal to
\[
\frac{\pi}{4} + 0.226 \times 4\pi(g-1) \times \frac{1}{2} \times \frac{1}{16} \geq 0.874 > \frac{\sqrt{3}}{2}.
\]
In the proof of the main theorem, we build around this idea making it more precise and getting better estimates.
\forgotten

\section{Disks and curvature}

Let $M$ be a closed surface of nonpositive Euler characteristic~$\chi(M)$ endowed with a Riemannian metric.
Let $\pi:UM \to M$ be the canonical projection defined on the unit tangent bundle~$UM$ of~$M$.
We will sometimes denote a unit tangent vector~$u \in UM$ by $u_x \in U_xM$  when we want to emphasize its basepoint~$x =\pi(u)$ in~$M$.

The Liouville measure on~$UM$ decomposes as 
\begin{equation} \label{eq:decomp}
du = dx \, du_x
\end{equation}
where $dx$ is the area measure of~$M$ and $du_x$ is the canonical length measure of~$U_xM$; see~\cite[\S1.M]{besse}.
Note that the Liouville measure is invariant under the geodesic flow~$\varphi_t:UM \to UM$ of~$M$ and that
\[
\vol(UM) = 2 \pi \area(M).
\]

Let $\KK$ be the Gaussian curvature of~$M$.
The Gauss--Bonnet formula for a domain~$D$ of~$M$ with piecewise smooth boundary~$\partial D$ can be written
\begin{equation} \label{eq:GBB}
\int_D \KK(x) \, dx + \int_{\partial D} \kappa(s) \, ds + \tau_{\partial D} = 2\pi \chi(D)
\end{equation}
where $\chi(D)$ is the Euler characteristic of~$D$, $\kappa$ is the geodesic curvature of~$\partial D$ and~$\tau_{\partial D}$ is the sum of the angular differences of the tangent vectors at the corner points of~$\partial D$.
When $D=M$, the Gauss--Bonnet formula for~$M$ takes the form
\begin{equation} \label{eq:GB}
\int_M \KK(x) \, dx = 2\pi \chi(M).
\end{equation}
It will be convenient to introduce the function
\[
\overline{K}:UM \to \R
\]
defined as $\overline{K}=K \circ \pi$. \\

The results of this section can be summarized as follows.
First, there is an integral-geometric formula relating a weighted average of $\overline{K}$ of certain horizontal disks of radii at most~$r$ in $UM$ in terms of the Euler characteristic and~$r$.  
Furthermore, the curvature condition yields an inequality between this weighted average and the average of~$K$ of certain disks in~$M$.  
The inequality between the two averages entails the existence of a curvature-rich disk. \\

The following function will play a key role in our approach.

\begin{definition}
Let $r>0$.
Define $F_r:M \to \R$ as
\[
F_r(x) = \int_0^r (r-\rho) \int_{U_xM} \int_0^\rho \overline{K}(\varphi_t(u_x))) \, t \, dt \, du_x \, d\rho.
\]
\end{definition}

Let us compute the integral of~$F_r$.

\begin{lemma} \label{lem:F}
We have
\[
\int_{M} F_r(x) \, dx = 2\pi \chi(M) \cdot \frac{\pi r^4}{12}.
\]
\end{lemma}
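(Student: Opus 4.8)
The plan is to unwind the definition of~$F_r$, interchange the order of integration, and then exploit the two structural properties of the Liouville measure recorded above: its decomposition $du = dx\,du_x$ from~\eqref{eq:decomp} and its invariance under the geodesic flow~$\varphi_t$.

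First I would integrate over~$x$ and move the integrations in~$\rho$ and~$t$ to the outside, writing
\[
\int_M F_r(x)\,dx = \int_0^r (r-\rho) \int_0^\rho t \left( \int_M \int_{U_xM} \overline{K}(\varphi_t(u_x)) \, du_x \, dx \right) dt \, d\rho.
\]
The interchange is legitimate because $M$ is compact and $K$ is bounded, so the integrand is bounded on the compact domain of integration. By the decomposition~\eqref{eq:decomp}, the parenthesized double integral is exactly $\int_{UM} \overline{K}(\varphi_t(u)) \, du$. Since the Liouville measure is $\varphi_t$-invariant, this quantity is independent of~$t$ and equals $\int_{UM} \overline{K}(u) \, du$. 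Now $\overline{K} = K \circ \pi$ is constant along each fiber~$U_xM$, whose total length is~$2\pi$, so by Fubini once more together with the Gauss--Bonnet formula~\eqref{eq:GB},
\[
\int_{UM} \overline{K}(u) \, du = 2\pi \int_M K(x) \, dx = 2\pi \cdot 2\pi \chi(M) = 4\pi^2 \chi(M).
\]

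It then remains to evaluate the elementary iterated integral that is left over:
\[
\int_0^r (r-\rho) \int_0^\rho t \, dt \, d\rho = \int_0^r (r-\rho) \frac{\rho^2}{2} \, d\rho = \frac{1}{2} \left( \frac{r^4}{3} - \frac{r^4}{4} \right) = \frac{r^4}{24}.
\]
Multiplying by~$4\pi^2 \chi(M)$ yields $\int_M F_r(x)\,dx = \frac{\pi^2 \chi(M) r^4}{6} = 2\pi \chi(M) \cdot \frac{\pi r^4}{12}$, which is the claim.

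I do not expect a genuine obstacle here. The only step calling for a word of justification is the interchange of integrals, which is immediate from boundedness on a compact domain; everything else is a bookkeeping of constants. The single conceptual ingredient is the flow-invariance of the Liouville measure, which is precisely what collapses the $t$-dependent average of $\overline{K} \circ \varphi_t$ down to the static total curvature $2\pi\chi(M)$, so that the $t$-integration no longer sees the dynamics and the result depends only on~$r$ and~$\chi(M)$.
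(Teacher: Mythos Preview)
Your proof is correct and follows essentially the same route as the paper: interchange the order of integration, use the decomposition~\eqref{eq:decomp} to pass to an integral over~$UM$, apply the $\varphi_t$-invariance of the Liouville measure to remove the $t$-dependence, then use Gauss--Bonnet and evaluate the leftover elementary integral. You are slightly more explicit about justifying Fubini and computing the factor~$r^4/24$, but the argument is otherwise identical to the paper's.
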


\begin{proof}
By the Liouville measure decomposition~\eqref{eq:decomp}, we derive
\begin{align*}
\int_{M} F_r(x) \, dx & = \int_{M} \int_0^r (r-\rho) \int_{U_xM} \int_0^\rho \overline{K}(\varphi_t(u_x))) \, t \, dt \, du_x \, d\rho \, dx \\
 & = \int_0^r (r-\rho) \int_0^\rho \int_{UM} \overline{K}(\varphi_t(u))) \, du \, t \, dt \, d\rho.
\end{align*}   
Now, by the invariance of the Liouville measure under the geodesic flow, we obtain
\begin{align*}
\int_{M} F_r(x) \, dx & = \int_0^r (r-\rho) \int_0^\rho \int_{UM} \overline{K}(u)) \, du \, t \, dt \, d\rho \\ 
 & = 2 \pi \, \int_{M} \KK(x) \, dx \, \int_0^r (r-\rho) \int_0^\rho t \, dt \, d\rho \\  
 & = 2\pi \chi(M) \cdot \frac{\pi r^4}{12}
\end{align*}   
where the last equality follows from the Gauss--Bonnet formula~\eqref{eq:GB}.
\end{proof}

Let us introduce the following function in connection with the area lower bound in Proposition~\ref{prop:theta}.

\begin{definition}
Let $r \in (0,\frac{1}{2} \sys(M))$.
Define $G_r:M \to \R$ as
\[
G_r(x) = \int_0^r (r-\rho) \int_{B_x(\rho)} \KK(y) \, dy \, d\rho
\]
where $B_x(\rho)$ is the ball of radius~$\rho$ centered at~$x$.
\end{definition}

The functions~$F_r$ and~$G_r$ are related through the following comparison result.

\begin{lemma} \label{lem:FB}
Suppose $M$ is nonpositively curved.  Let
$r\in(0,\frac{1}{2}\sys(M))$.  Then, for every $x \in M$,
\[
F_r(x) \geq G_r(x).
\]
\end{lemma}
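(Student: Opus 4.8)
The plan is to compare $F_r(x)$ and $G_r(x)$ term by term in the outer integral $\int_0^r (r-\rho)(\,\cdot\,)\,d\rho$, i.e. to show that for every fixed $\rho \in (0,r)$ one has
\[
\int_{U_xM} \int_0^\rho \overline{K}(\varphi_t(u_x))\, t\, dt\, du_x \;\geq\; \int_{B_x(\rho)} K(y)\, dy .
\]
Since $r < \tfrac12\sys(M)$, the ball $B_x(\rho)$ lies within the injectivity radius, so geodesic polar coordinates $(t,\theta)$ centered at $x$ are well defined on $B_x(\rho)$, with area element $dy = \mathcal{J}(t,\theta)\, dt\, d\theta$ for the Jacobian $\mathcal{J}$ of the exponential map. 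Writing $u_x = u_x(\theta)$ for the unit vector in direction $\theta$, the point $\varphi_t(u_x(\theta))$ is exactly the polar-coordinate point at radius $t$, so $\overline{K}(\varphi_t(u_x(\theta))) = K\big(\exp_x(t u_x(\theta))\big)$ is the curvature at that same point. Hence the two integrands differ only by the weight: $t$ on the $F_r$ side versus $\mathcal{J}(t,\theta)$ on the $G_r$ side.

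The key geometric input is therefore the Jacobian comparison $\mathcal{J}(t,\theta) \geq t$ valid for nonpositively curved surfaces — this is the infinitesimal form of the Rauch comparison / Günther inequality (geodesics spread at least as fast as in the flat plane when $K \le 0$). Combined with $K \le 0$, we get $K\,\mathcal{J} \geq K\, t$ pointwise (the inequality flips because $K$ is nonpositive and we are replacing the smaller weight $t$ by the larger weight $\mathcal{J}$ in front of a nonpositive quantity — wait, carefully: $K\le 0$ and $\mathcal J \ge t \ge 0$ give $K\mathcal J \le Kt$, so in fact $\int K\mathcal J \le \int K t$, i.e. $G_r(x) \le F_r(x)$, which is the desired direction). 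Integrating $K\,\mathcal{J} \le K\, t$ over $\theta \in [0,2\pi)$ and $t\in[0,\rho]$ yields $\int_{B_x(\rho)} K\, dy \le \int_{U_xM}\int_0^\rho \overline{K}(\varphi_t(u_x))\, t\, dt\, du_x$; then multiplying by the nonnegative factor $(r-\rho)$ and integrating over $\rho \in (0,r)$ gives $G_r(x) \le F_r(x)$.

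The main obstacle is making the identification $\overline{K}(\varphi_t(u_x(\theta))) = K(\exp_x(t u_x(\theta)))$ and the substitution $du_x\, dt \leftrightarrow d\theta\, dt$ fully rigorous, which amounts to recalling that $\pi(\varphi_t(u_x)) = \exp_x(t u_x)$ and that $B_x(\rho)$ is an embedded disk on which these polar coordinates are a genuine diffeomorphism — this is where $r < \tfrac12 \sys(M)$ (equivalently, $\rho$ below the injectivity radius, since closed geodesics have length $\ge \sys$ and short geodesic loops are excluded by nonpositive curvature) is used. Once the coordinates are set up, the proof reduces to the standard Jacobian estimate $\mathcal{J}(t,\theta)\ge t$ together with the sign of $K$, so no delicate estimate remains.
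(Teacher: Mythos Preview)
Your proof is correct and is essentially the same as the paper's: both set up the exponential map in polar coordinates as a diffeomorphism onto $B_x(\rho)\setminus\{x\}$ (using $r<\tfrac12\sys(M)$ and nonpositive curvature), invoke Rauch comparison to obtain the Jacobian inequality $\mathcal{J}(t,\theta)\ge t$ (equivalently $t\,dt\,du_x\le dy$), and then use $K\le 0$ to conclude. The only difference is presentational---the paper states the area-element comparison directly while you unwind it via the Jacobian---and your mid-proof self-correction on the inequality direction lands on the right answer.
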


\begin{proof}
By Gauss' Lemma and since $r < \frac{1}{2} \sys(M)$, the exponential map at~$x$ in polar coordinates induces a diffeomorphism 
\[
\begin{aligned}
U_xM \times (0,r] & \longrightarrow B_x(r) \setminus \{x\} \\
(u_x,t) & \longmapsto y=\pi(\varphi_t(u_x))
\end{aligned}
\]
Since $M$ is nonpositively curved, this map is distance nondecreasing by Rauch's comparison theorem, see~\cite[\S1.11]{CE}.
It follows that
\[
t \, dt \, du_x \leq dy.
\]
After integration and since~$\KK \leq 0$, this implies
\[
\int_0^r (r-\rho) \int_{U_xM} \int_0^\rho \overline{K}(\varphi_t(u_x))) \, t \, dt \, du_x \, d\rho \geq \int_0^r (r-\rho) \int_{B_x(\rho)} \KK(y) \, dy \, d\rho.
\]
That is, $F_r(x) \geq G_r(x)$.
\end{proof}

We can now derive our key estimate.

\begin{proposition} \label{prop:B}
Assume $M$ is nonpositively curved.  Let $r \in (0,\frac{1}{2}
\sys(M))$.  Then there exists $x_0 \in M$ such that
\[
G_r(x_0) \leq \frac{2\pi \chi(M)}{\area(M)} \cdot \frac{\pi r^4}{12}.
\]
\end{proposition}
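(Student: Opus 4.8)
The plan is to integrate the pointwise comparison of Lemma~\ref{lem:FB} over~$M$ and then apply a mean value argument. Since $r \in (0,\tfrac{1}{2}\sys(M))$, Lemma~\ref{lem:FB} applies and gives $F_r(x) \geq G_r(x)$ for every $x \in M$. Integrating this inequality against the area measure~$dx$ and invoking Lemma~\ref{lem:F}, I obtain
\[
\int_M G_r(x) \, dx \leq \int_M F_r(x) \, dx = 2\pi \chi(M) \cdot \frac{\pi r^4}{12}.
\]

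Next I would divide by~$\area(M) > 0$ to rewrite this as an upper bound on the mean value of~$G_r$:
\[
\frac{1}{\area(M)} \int_M G_r(x) \, dx \leq \frac{2\pi \chi(M)}{\area(M)} \cdot \frac{\pi r^4}{12}.
\]
The function $G_r \colon M \to \R$ is continuous—it is assembled by integrating the continuous curvature $K$ over the metric balls $B_x(\rho)$ and then over $\rho \in [0,r]$, with all radii below $\tfrac12 \sys(M)$—so it attains a minimum at some point $x_0 \in M$. As the minimum of a continuous function on a compact space does not exceed its average, $G_r(x_0)$ is bounded above by the mean value just displayed, which is precisely the asserted inequality. (Even without the continuity remark, one simply notes that $G_r$ cannot strictly exceed its average at every point, so some $x_0$ does the job.)

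There is no genuine obstacle here; the only points deserving a moment's attention are that the hypothesis $r < \tfrac{1}{2}\sys(M)$ is exactly what licenses the application of Lemma~\ref{lem:FB}, and that $\chi(M) \leq 0$ makes the right-hand side nonpositive, so the conclusion really does force $G_r(x_0)$ to be negative—large in absolute value in high genus—which is the feature exploited afterwards to produce a curvature-rich disk.
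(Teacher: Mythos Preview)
Your argument is correct and matches the paper's proof essentially line for line: integrate the pointwise inequality $G_r\le F_r$ from Lemma~\ref{lem:FB}, apply Lemma~\ref{lem:F} to evaluate $\int_M F_r$, divide by $\area(M)$, and conclude by the mean value principle. The additional remarks on continuity of~$G_r$ and the sign of~$\chi(M)$ are not needed for the bare statement but do no harm.
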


\begin{proof}
Taking the average integral $\fint_{M}$ over~$M$ in Lemma~\ref{lem:FB} leads to
\[
\fint_{M} G_r(x) \, dx \leq \fint_{M} F_r(x) \, dx = \frac{1}{\area(M)} \, \int_{M} F_r(u) \, du.
\]
By Lemma~\ref{lem:F}, this yields
\[
\fint_{M} G_r(x) \, dx \leq \frac{2\pi \chi(M)}{\area(M)} \cdot \frac{\pi r^4}{12}
\]
and the result immediately follows.
\end{proof}

\section{Disks and area}

\forget
\begin{proposition} \label{prop:theta}
Assume $M$ is nonpositively curved.  Denote by
\[
\theta = \int_{B_x(r)} \KK_-(y) \, dy
\]
the opposite of the total curvature of~$B_r(x)$.
Then, for $R \in [r,\frac{1}{2} \sys(M)]$, we have
\[
\area B_x(R) \geq \pi R^2 + \frac{\theta}{2} (R-r)^2.
\] 
\end{proposition}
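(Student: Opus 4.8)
The plan is to pass to geodesic polar coordinates centered at~$x$ and reduce the inequality to a one-variable integration estimate for the Jacobian of~$\exp_x$. As recalled in the proof of Lemma~\ref{lem:FB}, for $R<\frac12\sys(M)$ the exponential map~$\exp_x$ restricts to a diffeomorphism from the open ball of radius~$R$ in~$T_xM$ onto~$B_x(R)$, so one may write the area element in polar coordinates as $dy=j(t,u_x)\,dt\,du_x$, where for each $u_x\in U_xM$ the Jacobian $j(\cdot,u_x)\ge 0$ is the solution of the scalar Jacobi equation
\[
\partial_t^2 j(t,u_x)+\overline{K}(\varphi_t(u_x))\,j(t,u_x)=0, \qquad j(0,u_x)=0,\quad \partial_t j(0,u_x)=1.
\]
In these coordinates, $\area B_x(R)=\int_{U_xM}\int_0^R j(t,u_x)\,dt\,du_x$, and, since $M$ is nonpositively curved (so that $\KK_-=-\KK$),
\[
\theta=\int_{U_xM}\int_0^r \bigl(-\overline{K}(\varphi_\sigma(u_x))\bigr)\,j(\sigma,u_x)\,d\sigma\,du_x .
\]

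First I would integrate the Jacobi equation twice from~$0$, using the initial conditions, to get for each~$u_x$
\[
j(t,u_x)=t-\int_0^t (t-\sigma)\,\overline{K}(\varphi_\sigma(u_x))\,j(\sigma,u_x)\,d\sigma .
\]
Substituting this into the area integral, integrating in~$t\in[0,R]$, and exchanging the order of the $\sigma$- and $t$-integrations yields the exact identity
\[
\area B_x(R)=\pi R^2+\frac12\int_{U_xM}\int_0^R (R-\sigma)^2\,\bigl(-\overline{K}(\varphi_\sigma(u_x))\bigr)\,j(\sigma,u_x)\,d\sigma\,du_x .
\]
The integrand here is nonnegative because $\overline{K}\le 0$ and $j\ge 0$; hence restricting the $\sigma$-integral to~$[0,r]$ only decreases the right-hand side, and on that range $(R-\sigma)^2\ge (R-r)^2$. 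Pulling this constant out and recognizing the remaining integral as~$\theta$ gives $\area B_x(R)\ge \pi R^2+\frac{\theta}{2}(R-r)^2$ for every $R\in[r,\frac12\sys(M))$. The endpoint $R=\frac12\sys(M)$ then follows by letting $R\uparrow\frac12\sys(M)$, both sides of the inequality being continuous in~$R$.

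There is no real obstacle here: the argument is a direct computation once the polar-coordinate setup is in place. The only points that need attention are bookkeeping --- verifying the polar expression for~$\theta$ and justifying the interchange of integrations (unproblematic, everything being smooth on the relevant compact region). I note for orientation that the displayed identity may equivalently be written $\area B_x(R)=\pi R^2+\int_0^R (R-t)\,\Theta(t)\,dt$ with $\Theta(t)=\int_{B_x(t)}\KK_-(y)\,dy$, and that this form can be derived alternatively from the Gauss--Bonnet formula~\eqref{eq:GBB} applied to the disks~$B_x(t)$ together with the first-variation identity $\frac{d}{dt}\length(\partial B_x(t))=\int_{\partial B_x(t)}\kappa(s)\,ds$; the inequality then follows from the monotonicity of~$\Theta$ and the equality $\Theta(r)=\theta$.
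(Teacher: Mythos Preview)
Your argument is correct. The key identity
\[
\area B_x(R)=\pi R^2+\tfrac12\int_{U_xM}\int_0^R (R-\sigma)^2\bigl(-\overline K(\varphi_\sigma(u_x))\bigr)j(\sigma,u_x)\,d\sigma\,du_x
\]
follows exactly as you indicate from twice integrating the Jacobi equation and Fubini, and the two monotonicity steps (restricting $\sigma$ to $[0,r]$, then bounding $(R-\sigma)^2\ge(R-r)^2$) are legitimate because the integrand is nonnegative. The appeal to Lemma~\ref{lem:FB} for the diffeomorphism property of $\exp_x$ on balls of radius $<\tfrac12\sys(M)$ is appropriate, and the closed endpoint follows by continuity as you say.

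Your route differs from the paper's. The paper (in its final formulation of Proposition~\ref{prop:theta}) does \emph{not} assume nonpositive curvature; it therefore cannot work directly in geodesic polar coordinates, and instead approximates by a real analytic metric, applies the first variation formula to the outer boundary component $\mathcal{C}_x(s)$, and uses Gauss--Bonnet on the enclosed disk $\mathcal{D}_x(s)$ to obtain $L'(\mathcal{C}_x(s))=2\pi-\int_{\mathcal{D}_x(s)}K$, then integrates twice and passes to the limit. This is precisely the alternative derivation you sketch in your last paragraph. What each approach buys: your Jacobi-field computation is cleaner and avoids the approximation and cut-locus issues entirely, but it uses the curvature hypothesis in an essential way (to guarantee that $\exp_x$ is a diffeomorphism and that $j\ge0$ everywhere); the paper's argument is more robust in that it yields the inequality $\area B_x(r)\ge\pi r^2-G_r(x)$ for arbitrary metrics, with equality in the nonpositively curved case noted in the subsequent Remark.
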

\forgotten

In this section, we show that disks with large (negative) curvature have a large area, and proceed to the proof of the main theorem. \\

We will need the following area lower bound.

\begin{proposition} \label{prop:theta}
Let $r \in (0,\frac{1}{2} \sys(M))$.
Then
\begin{equation} \label{eq:lower}
\area B_x(r) \geq \pi r^2 - G_r(x).
\end{equation}
\end{proposition}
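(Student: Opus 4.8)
I want to express $\area B_x(r)$ via an integral of the area of the geodesic circles $\partial B_x(\rho)$ for $\rho \in (0,r)$, and then control the length of each circle $L(\rho) := \length(\partial B_x(\rho))$ using the Gauss--Bonnet formula applied to the disk $B_x(\rho)$. Concretely, since $r < \tfrac12\sys(M)$, the ball $B_x(\rho)$ is a topological disk with smooth boundary for every $\rho \in (0,r)$, so \eqref{eq:GBB} gives $\int_{B_x(\rho)} K\, dy + \int_{\partial B_x(\rho)} \kappa(s)\, ds = 2\pi$. The geodesic curvature of the distance circle is $\kappa = L'(\rho)$ (the first variation of arclength of equidistants), so this reads $L'(\rho) = 2\pi - \int_{B_x(\rho)} K(y)\, dy$. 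Together with the initial condition $L(0) = 0$, this is the standard Gauss--Bonnet first-variation identity.

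The key steps, in order, are as follows. First, I note $\area B_x(r) = \int_0^r L(\rho)\, d\rho$ by the coarea/polar-coordinates formula in the injectivity-radius range. Second, I integrate the identity $L'(\rho) = 2\pi - \int_{B_x(\rho)} K\, dy$ from $0$ to $\rho$ to get $L(\rho) = 2\pi\rho - \int_0^\rho \int_{B_x(t)} K(y)\, dy\, dt$. Third, I integrate once more in $\rho$ from $0$ to $r$, using Fubini on the double integral to rewrite $\int_0^r \int_0^\rho \big(\int_{B_x(t)} K\, dy\big)\, dt\, d\rho = \int_0^r (r - t)\int_{B_x(t)} K(y)\, dy\, dt = G_r(x)$, which is exactly the definition of $G_r$. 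This yields $\area B_x(r) = \pi r^2 - G_r(x)$ — in fact an equality, of which \eqref{eq:lower} is the stated inequality (and since $M$ is nonpositively curved, $G_r(x) \le 0$, so this also recovers the familiar lower bound $\area B_x(r) \ge \pi r^2$; the finer form is what feeds into the main theorem).

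The main obstacle is making the variation-of-arclength identity rigorous, in particular that $\kappa = L'(\rho)$ for geodesic distance circles and that $L$ is differentiable. This is where $r < \tfrac12\sys(M)$ is used: it guarantees that $\exp_x$ is a diffeomorphism on the ball of radius $r$ in $T_xM$ (no conjugate points, by nonpositive curvature, and no short geodesic loops), so the circles $\partial B_x(\rho)$ are smooth embedded curves and one may legitimately pass to geodesic polar coordinates $(\rho, \varphi)$ with metric $d\rho^2 + f(\rho,\varphi)^2 d\varphi^2$, where $f = f(\rho,\varphi) > 0$, $f(0,\varphi) = 0$, $\partial_\rho f(0,\varphi) = 1$. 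In these coordinates $L(\rho) = \int f(\rho,\varphi)\, d\varphi$, $\area B_x(r) = \int_0^r L(\rho)\, d\rho$, and the Jacobi equation $\partial_\rho^2 f = -K f$ integrated twice recovers the same identity directly; this is perhaps the cleanest route and sidesteps quoting the geodesic-curvature interpretation. Either way, the computation is routine once the coordinate system is in place; the only real content is the reduction to polar coordinates, which the systole hypothesis supplies.
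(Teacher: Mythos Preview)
Your argument follows the same strategy as the paper's (first variation of the length of distance circles, Gauss--Bonnet, two integrations, identification with~$G_r$), but it proves a narrower statement: you invoke nonpositive curvature to make $\exp_x$ a diffeomorphism on the $r$-ball, whereas Proposition~\ref{prop:theta} is stated with no curvature hypothesis beyond the standing assumption $\chi(M)\le 0$. Without $K\le 0$, the bound $r<\tfrac12\sys(M)$ does \emph{not} force $\exp_x$ to be a diffeomorphism on that ball---conjugate points or short contractible geodesic loops may well occur---so the distance circles need not be smooth and your polar-coordinate computation is unavailable.

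The paper handles the general case by first approximating the metric by a real-analytic one, for which the relevant level curves are piecewise smooth with finitely many corners; the first-variation formula then acquires the corner term~$\tau_s$, and the coarea step yields only an inequality rather than an equality. Passing to the limit gives~\eqref{eq:lower}. The remark immediately following the proposition records exactly what you prove: under nonpositive curvature the circles are smooth, the corner contributions vanish, and \eqref{eq:lower} becomes an identity. Since only the nonpositively curved case is used downstream, your argument suffices for the main theorem; it just does not establish the proposition in the generality in which it is stated.
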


\begin{proof}
First, approximate the metric on~$M$ by a real analytic metric.
For this new metric, the component~$\mathcal{C}_x(s)$ of the circle of radius~$s \leq r$ centered at~$x$ surrounding its center is a piecewise smooth curve.
Moreover, the length function~$s \mapsto L(\mathcal{C}_x(s))$ is differentiable except for a finite number of values of~$s$, and its derivative is given by the first variation formula.
Specifically, as long as~$B_x(s)$ is nonempty, we have
\[
L'(\mathcal{C}_x(s)) = \int_{\mathcal{C}_x(s)} \kappa(t) \, dt + \tau_s
\]
for almost every~$s$, where $\kappa$ is the geodesic curvature of the curve~$\mathcal{C}_x(s)$ and $\tau_s$ is the sum of the angular difference of the tangent vectors at the corner points of~$\mathcal{C}_x(s)$.
By the Gauss--Bonnet formula~\eqref{eq:GBB} for domains with boundary and since $\mathcal{C}_x(s)$ bounds a topological disk~$\mathcal{D}_x(s)$, we derive
\[
L'(\mathcal{C}_x(s)) = 2 \pi - \int_{\mathcal{D}_x(s)} K(y) \, dy.
\]
Integrating this relation twice and using the coarea formula lead to
\begin{align}
\area \mathcal{D}_x(r) & \geq \pi r^2 - \int_0^r \int_0^\rho  \int_{\mathcal{D}_x(s)} K(y) \, dy \, ds \, d\rho \label{eq:coarea} \\
 & \geq \pi r^2 - \int_0^r (r-\rho) \, \int_{\mathcal{D}_x(\rho)} K(y) \, dy \, d\rho. \nonumber
\end{align}
To conclude, simply observe that, when the real analytic metric approaches the initial metric on~$M$, the domains~$\mathcal{D}_x(\rho)$ Hausdorff converge to the balls~$B_x(\rho)$ (uniformly in~$\rho$).
In particular, the area of~$\mathcal{D}_x(r)$ converges to the one of~$B_x(r)$, and $\int_{\mathcal{D}_x(\rho)} K(y) \, dy$ uniformly converges to $\int_{B_x(\rho)} K(y) \, dy$.
Hence,
\[
\area B_x(r) \geq \pi r^2 - \int_0^r (r-\rho) \, \int_{B_x(\rho)} K(y) \, dy \, d\rho.
\]
\end{proof}

\begin{remark}
When the surface~$M$ is nonpositively curved, there is an equality in~\eqref{eq:lower}.
Indeed, in this case, we can approximate the metric by a real analytic metric of negative curvature.
In this case, the curve~$\mathcal{C}_x(s)$ represents the circle of radius~$s$ around~$x$ and the domain~$\mathcal{D}_x(s)$ it surrounds coincides with the metric ball of radius~$s$ around~$x$.
Because of the curvature condition, this ball is a topological disk.
In this case, the coarea formula leads to an equality in~\eqref{eq:coarea} and in the following inequalities.
\end{remark}

\forget

\begin{proof}
We argue as in the proof of Theorem~2.4 in~\cite{KS21} making the necessary changes.
By metric approximation (see\cite[\S2]{KS21}), we can assume that the metric on~$M$ is piecewise flat with conical singularities.
Under this assumption, the circle~$\mathcal{C}_x(s)$ of radius~$s$ centered at~$x$ is a piecewise smooth curve.
Moreover, the length function~$s \mapsto L(\mathcal{C}_x(s))$ is differentiable except for a finite number of values of~$s$, and its derivative is given by the first variation formula; see~\cite[Lemma~3.2.4]{BZ}.
Specifically, as long as~$B_x(s)$ is nonempty, we have
\[
L'(\mathcal{C}_x(s)) = \int_{\mathcal{C}_x(s)} \kappa(t) \, dt + \tau_s
\]
for almost every~$s$, where $\kappa$ is the geodesic curvature of the curve~$\mathcal{C}_x(s)$ and $\tau_s$ is the sum of the angular difference of the tangent vectors at the corner points of~$\mathcal{C}_x(s)$.
Denote by~$\omega$ the curvature measure on~$M$.
(In the smooth case, $\omega = K dx$, where $dx$ is the area measure of~$M$.)
By the Gauss--Bonnet formula for piecewise flat metrics and since $B_x(s)$ is a topological ball, we derive
\[
L'(\mathcal{C}_x(s)) = 2 \pi - \omega(B_x(s)).
\]
Integrating this relation twice leads to
\begin{align*}
\area B_x(r) & = \pi r^2 - \int_0^r \int_0^\rho \omega(B_x(s)) \, ds \, d\rho \\
 & = \pi r^2 - \int_0^r (r-\rho) \, \omega(B_x(\rho)) \, d\rho.
\end{align*}
\end{proof}

\forgotten

Putting everything together, we obtain the following area lower bound.

\begin{proposition}
\label{prop:end}
Let $r\in(0,\frac{1}{2}\sys(M))$ where $M$ is a nonpositively curved
surface.  Then there exists a ball~$B(r)$ of radius~$r$ with
\[
\area B(r) \geq \pi r^2 - \frac{2\pi \chi(M)}{\area(M)} \cdot
\frac{\pi}{12} r^4.
\]
\end{proposition}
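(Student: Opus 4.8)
The plan is to synthesize the two main estimates of the previous sections; since everything substantive has already been done, this last step is a short combination argument.

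First I would apply Proposition~\ref{prop:B}, whose hypotheses ($M$ nonpositively curved, $r\in(0,\frac12\sys(M))$) are exactly those assumed here, to obtain a point $x_0\in M$ with
\[
G_r(x_0)\leq\frac{2\pi\chi(M)}{\area(M)}\cdot\frac{\pi r^4}{12}.
\]
Next I would apply Proposition~\ref{prop:theta} at this point $x_0$; that proposition only requires $r\in(0,\frac12\sys(M))$, so it applies, and it gives
\[
\area B_{x_0}(r)\geq\pi r^2-G_r(x_0).
\]
Combining the two displayed inequalities and setting $B(r):=B_{x_0}(r)$ yields
\[
\area B(r)\geq\pi r^2-\frac{2\pi\chi(M)}{\area(M)}\cdot\frac{\pi}{12}r^4,
\]
which is the assertion.

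I do not expect a genuine obstacle at this stage: all the analytic content lives in the earlier results — the integral-geometric identity of Lemma~\ref{lem:F} (invariance of the Liouville measure under the geodesic flow together with Gauss--Bonnet), the comparison $F_r\geq G_r$ of Lemma~\ref{lem:FB} (Rauch's theorem together with $K\leq 0$), the averaging step in Proposition~\ref{prop:B}, and the Gauss--Bonnet/coarea computation underlying Proposition~\ref{prop:theta}. The only points worth a moment's care are bookkeeping ones: that the nonpositive curvature hypothesis is invoked precisely where needed (in Lemma~\ref{lem:FB}, hence in Proposition~\ref{prop:B}, but not in Proposition~\ref{prop:theta}), that the constraint $r<\frac12\sys(M)$ is in force throughout so that polar normal coordinates and the topological-disk structure of small balls are available, and — if one wants the boundary case $r=\frac12\sys(M)$ quoted in the introduction — that both sides of the inequality are continuous in $r$ and the balls $B_{x_0}(r)$ vary continuously in the Hausdorff sense, so the estimate passes to the limit $r\uparrow\frac12\sys(M)$.
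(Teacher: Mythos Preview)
Your argument is correct and matches the paper's proof essentially verbatim: apply Proposition~\ref{prop:B} to find $x_0$ with $G_r(x_0)\leq\frac{2\pi\chi(M)}{\area(M)}\cdot\frac{\pi r^4}{12}$, then plug this into the lower bound $\area B_{x_0}(r)\geq\pi r^2-G_r(x_0)$ of Proposition~\ref{prop:theta}. The additional remarks about where each hypothesis is used and about passing to the limit $r\uparrow\frac12\sys(M)$ are accurate and helpful but not part of the paper's proof itself.
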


\begin{proof}
By Proposition~\ref{prop:B}, there exists~$x_0 \in M$ such that
\[
G_r(x_0) \leq \frac{2\pi \chi(M)}{\area(M)} \cdot \frac{\pi r^4}{12}.
\]
By Proposition~\ref{prop:theta}, it follows that
\[
\area B_{x_0}(r) \geq  \pi r^2 - \frac{2\pi \chi(M)}{\area(M)} \cdot \frac{\pi}{12} r^4.
\]
\end{proof}

We can conclude the proof of Theorem~\ref{theo:main} as follows.

\begin{proof}[Proof of Theorem~$\ref{theo:main}$]
We apply Proposition~\ref{prop:end} with $r=\frac{1}{2} \sys(M)$.  
Since
\begin{equation}
\area(M) \geq \area B(\tfrac{1}{2} \sys(M)),
\end{equation}
we obtain
\[
\area(M) \geq \left( \frac{\pi}{4} - \frac{\pi^2 \chi(M)}{96} \cdot
\frac{\sys(M)^2}{\area(M)} \right) \sys(M)^2.
\]
Expressed in terms of the systolic area, this inequality takes the following form
\[
\sigma(M) \geq \lambda + \frac{\mu}{\sigma(M)},
\]
where $\lambda = \frac{\pi}{4}$ and $\mu = - \frac{\pi^2 \chi(M)}{96} \geq 0$.
That is, $\sigma(M)^2 - \lambda \, \sigma(M) - \mu \geq 0$.
Hence,
\[
\sigma(M) \geq \frac{\lambda + \sqrt{\lambda^2 + 4 \mu}}{2}.
\]
It follows that
\begin{equation}
\label{eq:sqrt}
\sigma(M) \geq \frac{\pi}{8} \left( 1 + \sqrt{1-\tfrac{2}{3} \chi(M)} \right)
\end{equation}
for $\chi(M) \leq -1$.  Hence
$\sigma(M)\geq\frac{\pi}{8}\left(1+\sqrt{\tfrac{5}{3}}\right)\approx0.899$
and the surface~$M$ is Loewner in this case.  For the Klein bottle
(where $\chi=0$), the minimal value of the systolic area over
nonpositively curved metrics is attained by a square flat metric and
is equal to~$1$.  (Alternatively, the minimal value of the systolic
area of a Riemannian Klein bottle is equal to~$\frac{2 \sqrt{2}}{\pi}
\approx 0.9$; see~\cite{bav}.)  Hence, every closed nonpositively
curved surface is Loewner.
\end{proof}

\section{Corollaries}

\forget
\begin{remark}
Our proof shows that every closed nonpositively curved surface which is not a torus has systolic area at least 
\[
\frac{\pi}{8} \left( 1 + \sqrt{\tfrac{5}{3}} \right) \approx 0.899.
\]
It also shows that every closed nonpositively curved surface of genus~\mbox{$g \geq 2$} has systolic area at least
\[
\frac{\pi}{8} \left( 1 + \sqrt{\tfrac{4g-1}{3}} \right) \geq \frac{\pi}{8} \left( 1 + \sqrt{\tfrac{7}{3}} \right) \approx 0.992.
\]
It follows that nonpositively curved surfaces of genus~$g \geq 4$ have a systolic area greater than the minimal systolic area of a nonpositively curved surface of genus~$2$, which is equal to $3(\sqrt{2}-1) \approx 1.242$; see~\cite{KS06pams}.

It would be interesting to know if this still holds true in genus~$3$, showing a monotonicity of the minimal systolic area in terms of the genus for low genera.
Note that the best value of the systolic area we know of in genus~$3$ is given by a hyperbolic metric (see~\cite{sch}) and is equal to 
\[
\frac{2\pi}{\arcsinh(2+\sqrt{3})^2} \approx 1.528.
\]
\end{remark}
\forgotten

In~\cite{KS06pams} and~\cite{KS15}, we computed the least value of the systolic area over all nonpositively curved metrics on the genus~$2$ surface~$\Sigma_2$ and Dyck's surface~$3\RP^2$.
For the genus~$3$ surface, Calabi~\cite{cal} presented a CAT$(0)$ piecewise flat metric with the lowest systolic area we know of.
%the description of the hyperbolic metric with the conjectured maximal systole (among hyperbolic metrics) is presented in~\cite{sch}.
See Remark~\ref{rem} below.
Combined with the systolic inequality~\eqref{eq:sqrt}, this leads to the following result. % summarized by a table where the last column represents the minimal value of the systolic area over nonpositively curved metrics.

%Combined with the systolic inequality~\eqref{eq:sqrt}, this leads to the following result summarized by a table where the last column represents the minimal value of the systolic area over nonpositively curved metrics.

\begin{corollary} \label{coro:table}
%Let $M$ be a closed nonpositively curved surface.  
We obtain the following data, where the third column represents the minimal value of the systolic area over nonpositively curved metrics.
\[
\def\arraystretch{2.3}
\begin{array}{|c|c|c|c|}
\hline
 M & \chi(M) & \sigma_{\leq 0}(M) & source \\
 \hline
 \hline
\T^2 & 0 & \frac{\sqrt{3}}{2} \approx 0.866 & \text{\cite{pu}} \\
\hline
\K^2=2\RP^2 & 0 & 1 & \textrm{obvious} \\
\hline
3\RP^2 & -1 & 1 + \frac{(169-38 \sqrt{19})^{\frac{1}{2}}}{12} \approx 1.152 & \text{\cite{KS15}} \\
\hline
\Sigma_2 & -2 & 3(\sqrt{2}-1) \approx 1.242 &\text{\cite{KS06pams}} \\
\hline
\Sigma_3 & -4 & < \frac{7 \sqrt{3}}{8} \approx 1.515 & \text{\cite{cal}} \\
\hline
\Sigma_{g \geq 3} & 2-2g & \geq \frac{\pi}{8} \left( 1 + \sqrt{\tfrac{4g-1}{3}} \right) \geq 1.144 & \eqref{eq:sqrt} \\
\hline
\Sigma_{g \geq 4} & 2-2g & \geq \frac{\pi}{8} \left( 1 + \sqrt{\tfrac{4g-1}{3}} \right) \geq 1.270 & \eqref{eq:sqrt} \\
\hline
n \RP^2 \text{(with $n \geq 4$)} & 2-n & \geq \frac{\pi}{8} \left( 1 + \sqrt{\tfrac{2n-1}{3}} \right) \geq 0.992 & \eqref{eq:sqrt} \\
\hline
n \RP^2 \text{(with $n \geq 7$)} & 2-n & \geq \frac{\pi}{8} \left( 1 + \sqrt{\tfrac{2n-1}{3}} \right) \geq 1.210 & \eqref{eq:sqrt} \\
\hline
\end{array}
\]
\end{corollary}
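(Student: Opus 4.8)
The plan is to verify the rows of the table by sorting them into three groups: the rows whose source is~\eqref{eq:sqrt}, the rows quoting~\cite{KS06pams}, \cite{KS15} and~\cite{cal}, and the two rows with $\chi=0$. For the first group ($\Sigma_{g\geq3}$, $\Sigma_{g\geq4}$, and $n\RP^2$ for $n\geq4$ and for $n\geq7$) I would substitute the Euler characteristics $\chi(\Sigma_g)=2-2g$ and $\chi(n\RP^2)=2-n$ into~\eqref{eq:sqrt}. This is legitimate: the constructions of Sections~2 and~3 (Liouville measure, geodesic flow, Gauss--Bonnet, Rauch comparison) are insensitive to orientability, so the inequality established with Theorem~\ref{theo:main} holds for nonorientable surfaces as well, and $\chi\leq-1$ holds throughout these ranges. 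Simplifying $1-\tfrac{2}{3}(2-2g)=\tfrac{4g-1}{3}$ and $1-\tfrac{2}{3}(2-n)=\tfrac{2n-1}{3}$ gives
\[
\sigma(\Sigma_g)\geq\frac{\pi}{8}\left(1+\sqrt{\tfrac{4g-1}{3}}\right)\ \ (g\geq3),\qquad \sigma(n\RP^2)\geq\frac{\pi}{8}\left(1+\sqrt{\tfrac{2n-1}{3}}\right)\ \ (n\geq4).
\]
Since both right-hand sides increase with $g$, resp.\ $n$, evaluating at $g=3$, $g=4$, $n=4$, $n=7$ produces the tabulated numbers $1.144$, $1.270$, $0.992$, $1.210$; this last part is routine arithmetic.

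For the quoted rows, the third-column entries for $\Sigma_2$ and $3\RP^2$ are, verbatim, the least systolic areas over nonpositively curved metrics computed in~\cite{KS06pams} and~\cite{KS15}. For $\Sigma_3$ I would invoke Calabi's CAT$(0)$ piecewise flat metric from~\cite{cal}, whose systolic area equals $\tfrac{7\sqrt{3}}{8}$; as discussed in Remark~\ref{rem}, it yields an upper bound $\sigma_{\leq 0}(\Sigma_3)\leq\tfrac{7\sqrt{3}}{8}$ once one knows that its cone points of angle $>2\pi$ can be smoothed into a genuine Riemannian metric of nonpositive curvature (or that it is an admissible limit of such metrics).

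For the two rows with $\chi=0$: on the torus, Loewner's inequality~\cite{pu} gives $\sigma(\T^2)\geq\tfrac{\sqrt{3}}{2}$ for every Riemannian metric, and the flat hexagonal torus --- the quotient of $\R^2$ by the equilateral lattice, of systole~$1$ and area~$\tfrac{\sqrt{3}}{2}$ --- is flat, hence nonpositively curved, and realizes equality, so $\sigma_{\leq 0}(\T^2)=\tfrac{\sqrt{3}}{2}$. On the Klein bottle, the Gauss--Bonnet formula~\eqref{eq:GB} forces any nonpositively curved metric on a closed surface with $\chi=0$ to be flat, so it suffices to minimize $\sigma$ over flat Klein bottles. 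Writing such a surface as $\R^2/\Gamma$ with $\Gamma$ generated by $(x,y)\mapsto(x+a,-y)$ and $(x,y)\mapsto(x,y+b)$, one gets $\area=ab$ and, inspecting the conjugacy classes of $\Gamma$ (translations by the rectangular lattice generated by $(2a,0)$ and $(0,b)$, and glide reflections of axial translation length~$a$), $\sys=\min(a,b)$. Hence $\sigma=\max(a,b)/\min(a,b)\geq1$, with equality for the square flat metric, so $\sigma_{\leq 0}(\K^2)=1$; alternatively this is contained in the analysis of~\cite{bav}.

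I do not expect a deep obstacle, since the corollary is an assembly of direct specializations of~\eqref{eq:sqrt} and of known computations. The delicate point is the $\Sigma_3$ entry: one must check that Calabi's \emph{piecewise flat} CAT$(0)$ metric genuinely bounds the infimum $\sigma_{\leq 0}(\Sigma_3)$ taken over \emph{smooth} Riemannian metrics of nonpositive curvature, i.e.\ that a cone point of angle $>2\pi$ can be rounded off while keeping the curvature $\leq0$ and with the systolic area converging to $\tfrac{7\sqrt{3}}{8}$. This is exactly the content of Remark~\ref{rem}, and it is the step I would spell out carefully.
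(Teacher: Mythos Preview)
Your proposal is correct and follows the same approach as the paper, which presents the corollary without a separate proof: the introductory paragraph simply states that the table combines the cited computations from~\cite{KS06pams}, \cite{KS15}, \cite{cal} with the inequality~\eqref{eq:sqrt}, and the Klein bottle case was already handled in the proof of Theorem~\ref{theo:main}. Your write-up is more detailed than the paper's (in particular your explicit verification that $\sigma_{\leq 0}(\K^2)=1$ via the classification of flat Klein bottles), but the underlying logic is identical; one small overstatement is that Remark~\ref{rem} does not actually spell out the smoothing of Calabi's cone points---that approximation step is only alluded to elsewhere in the paper---so if you intend to ``spell it out carefully'' you would be adding to, not quoting, the paper.
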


We obtain the following consequences.

\begin{corollary}  \label{coro}
Every closed nonpositively curved surface~$M$ other than a torus has a systolic area at least
\[
 \frac{\pi}{8} \left( 1 + \sqrt{\tfrac{7}{3}} \right) \approx 0.992.
\]
\end{corollary}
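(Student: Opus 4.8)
The plan is to deduce Corollary~\ref{coro} directly from the systolic inequality~\eqref{eq:sqrt} together with the case analysis already assembled in Corollary~\ref{coro:table}. The statement concerns an arbitrary closed nonpositively curved surface~$M$ that is not a torus, so $\chi(M) \leq 0$ and we must rule out only the torus. First I would split according to whether $M$ is orientable or not. If $M$ is orientable of genus~$g$, then not being a torus means $g \geq 2$, so $\chi(M) = 2 - 2g \leq -2$; in particular $\chi(M) \leq -1$, so~\eqref{eq:sqrt} applies and gives
\[
\sigma(M) \geq \frac{\pi}{8}\left(1 + \sqrt{1 - \tfrac{2}{3}\chi(M)}\right) \geq \frac{\pi}{8}\left(1 + \sqrt{1 + \tfrac{4}{3}}\right) = \frac{\pi}{8}\left(1 + \sqrt{\tfrac{7}{3}}\right),
\]
using that $1 - \tfrac{2}{3}\chi(M)$ is decreasing in $\chi(M)$ and $\chi(M) \leq -2$. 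This is already the desired bound.

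Next I would handle the nonorientable case, $M = n\RP^2$ with $\chi(M) = 2 - n$. Since $M$ is not a torus, the only excluded surface among the $n\RP^2$ is none of them except possibly the Klein bottle $\K^2 = 2\RP^2$ (with $\chi = 0$) and the projective plane $\RP^2$ (with $\chi = 1 > 0$, hence not nonpositively curved, so it does not occur). So I must treat $n = 2$ and $n \geq 3$ separately. For $n \geq 3$ we have $\chi(M) = 2 - n \leq -1$, so~\eqref{eq:sqrt} gives
\[
\sigma(M) \geq \frac{\pi}{8}\left(1 + \sqrt{1 - \tfrac{2}{3}(2-n)}\right) = \frac{\pi}{8}\left(1 + \sqrt{\tfrac{2n-1}{3}}\right),
\]
which at $n = 3$ equals $\frac{\pi}{8}(1 + \sqrt{5/3}) \approx 0.899$; this is \emph{not} enough. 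So for Dyck's surface $3\RP^2$ I would instead invoke the exact value $1 + \frac{(169 - 38\sqrt{19})^{1/2}}{12} \approx 1.152$ from~\cite{KS15} recorded in Corollary~\ref{coro:table}, which comfortably exceeds $0.992$. For $n \geq 4$, the bound $\frac{\pi}{8}(1 + \sqrt{(2n-1)/3})$ is increasing in $n$, so its minimum over $n \geq 4$ is at $n = 4$, namely $\frac{\pi}{8}(1 + \sqrt{7/3}) \approx 0.992$, exactly the claimed constant. Finally, for the Klein bottle $\K^2$ ($\chi = 0$), I would cite the value $\sigma_{\leq 0}(\K^2) = 1$ from Corollary~\ref{coro:table} (the square flat metric is minimizing among nonpositively curved metrics), and $1 \geq 0.992$.

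Assembling these cases: every closed nonpositively curved surface other than the torus has systolic area at least the minimum of the constants appearing above, namely $\frac{\pi}{8}(1 + \sqrt{7/3}) \approx 0.992$, attained (among the estimates used) by genus-two surfaces via the $\chi = -2$ case and by $n\RP^2$ with $n = 4$. The only genuinely delicate point is bookkeeping: one must check that the weakest estimate supplied by~\eqref{eq:sqrt} across all admissible Euler characteristics is still $\geq \frac{\pi}{8}(1+\sqrt{7/3})$, which forces one to notice that~\eqref{eq:sqrt} alone is too weak for $3\RP^2$ and to substitute the sharp value from~\cite{KS15} there. No further analytic input is needed; the corollary is a clean consequence of Corollary~\ref{coro:table}, so the main obstacle is purely organizational rather than mathematical.
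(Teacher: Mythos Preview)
Your proof is correct and follows essentially the same approach as the paper: the corollary is stated as an immediate consequence of Corollary~\ref{coro:table}, and your case analysis is exactly how one reads it off the table, using~\eqref{eq:sqrt} where it suffices and substituting the known sharp values for $\K^2$ and $3\RP^2$ where it does not. The only cosmetic difference is that for $\Sigma_2$ the paper would cite the exact value $3(\sqrt{2}-1)$ from~\cite{KS06pams}, whereas you use~\eqref{eq:sqrt} directly with $\chi=-2$; either way yields at least $\frac{\pi}{8}(1+\sqrt{7/3})$.
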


\begin{corollary}
Every orientable nonpositively curved surface~$\Sigma_g$ of genus
\mbox{$g \geq 2$} has a systolic area at least
\[
\frac{\pi}{8} \left( 1 + \sqrt{\tfrac{11}{3}} \right) \approx 1.144.
\]
\end{corollary}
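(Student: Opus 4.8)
The plan is to split the argument according to the genus, using the sharp systolic inequality~\eqref{eq:sqrt} for $g \geq 3$ and invoking a known exact computation for $g = 2$.

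First, suppose $g \geq 3$. Then $\chi(\Sigma_g) = 2 - 2g \leq -4 \leq -1$, so inequality~\eqref{eq:sqrt} applies and yields
\[
\sigma(\Sigma_g) \geq \frac{\pi}{8}\left(1 + \sqrt{1 - \tfrac{2}{3}\chi(\Sigma_g)}\right) = \frac{\pi}{8}\left(1 + \sqrt{\tfrac{4g-1}{3}}\right).
\]
The right-hand side is an increasing function of~$g$, hence on the range $g \geq 3$ it is minimized at $g = 3$, where it equals $\frac{\pi}{8}\left(1 + \sqrt{\tfrac{11}{3}}\right) \approx 1.144$. Thus the claimed bound holds for every $g \geq 3$.

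It remains to treat $g = 2$. Here $\chi(\Sigma_2) = -2$, and~\eqref{eq:sqrt} only gives $\sigma(\Sigma_2) \geq \frac{\pi}{8}\left(1 + \sqrt{\tfrac{7}{3}}\right) \approx 0.992$, which falls short of the asserted value. Instead, one invokes the exact value of the minimal systolic area over nonpositively curved metrics on the genus~$2$ surface established in~\cite{KS06pams}, namely $3(\sqrt{2} - 1) \approx 1.242$. Since $3(\sqrt{2}-1) > \frac{\pi}{8}\left(1 + \sqrt{\tfrac{11}{3}}\right)$, the inequality holds for $g = 2$ as well, and combining the two cases completes the proof.

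The only delicate point is that the genus~$2$ case cannot be deduced from~\eqref{eq:sqrt} alone; it genuinely requires the sharper result of~\cite{KS06pams}. This is the main obstacle, and it is a mild one.
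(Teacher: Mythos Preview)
Your proposal is correct and matches the paper's own argument: the corollary is stated as a direct consequence of the table in Corollary~\ref{coro:table}, which combines the bound~\eqref{eq:sqrt} for $g \geq 3$ (minimized at $g=3$) with the exact value $3(\sqrt{2}-1)$ from~\cite{KS06pams} for $g=2$. Your identification of the genus~$2$ case as requiring the external input from~\cite{KS06pams} is precisely the point.
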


\begin{definition}
A \emph{Loewner disk} of a closed nonsimply connected surface~$M$ with a Riemannian metric is a disk of radius~$\frac{1}{2} \sys(M)$ and area at least~$\frac{\sqrt{3}}{2} \sys(M)^2$.
\end{definition}

A closed hyperbolic surface of sufficiently small systole does not
contain any Loewner disk.  All disks of radius half the systole have
area close to that of a Euclidean disk of the same radius.  However,
the existence of Loewner disks is guaranteed in the following cases.

\begin{corollary}
  Let $\Sigma_g$ be a nonpositively curved surface of genus~$g\geq2$.
\begin{enumerate}
  \item
Let $C=\frac{\pi^2}{12(2 \sqrt{3}-\pi)}\approx2.5502$. 
If $\sigma(\Sigma_g) \leq C(g-1)$ then $\Sigma_g$ contains a Loewner disk.
\item
If $\Sigma_g$ is systolically extremal then it contains a Loewner
disk.
\end{enumerate}
\end{corollary}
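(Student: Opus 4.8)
The plan is to read off both assertions directly from Proposition~\ref{prop:end}, applied with $r=\tfrac12\sys(\Sigma_g)$ (the endpoint $r=\tfrac12\sys$ being reached by the same continuity argument as in the proof of Theorem~\ref{theo:main}), combined with the threshold $\tfrac{\sqrt3}{2}\sys(\Sigma_g)^2$ in the definition of a Loewner disk. Write $s=\sys(\Sigma_g)$ and $A=\area(\Sigma_g)$, and substitute $\chi(\Sigma_g)=2-2g$, so that $-2\pi\chi(\Sigma_g)=4\pi(g-1)$. For part~(1), Proposition~\ref{prop:end} then yields a ball $B(r)$ of radius $r=\tfrac12 s$ with
\[
\area B(r)\ \geq\ \frac{\pi}{4}\,s^2+\frac{\pi^2(g-1)}{48\,A}\,s^4
\ =\ \left(\frac{\pi}{4}+\frac{\pi^2(g-1)}{48\,\sigma(\Sigma_g)}\right)s^2 ,
\]
using $s^2/A=1/\sigma(\Sigma_g)$. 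This ball is a Loewner disk as soon as the parenthesis is at least $\tfrac{\sqrt3}{2}$, that is, as soon as $\dfrac{\pi^2(g-1)}{48\,\sigma(\Sigma_g)}\ \geq\ \dfrac{2\sqrt3-\pi}{4}$, which rearranges precisely to $\sigma(\Sigma_g)\leq \dfrac{\pi^2(g-1)}{12(2\sqrt3-\pi)}=C(g-1)$. Thus part~(1) is a direct computation: its hypothesis is exactly the inequality that makes the ball furnished by Proposition~\ref{prop:end} large enough to be Loewner.

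For part~(2) I would first reduce to part~(1) by showing that a systolically extremal $\Sigma_g$ satisfies $\sigma(\Sigma_g)\leq C(g-1)$. Being systolically extremal, $\sigma(\Sigma_g)$ is bounded above by the systolic area of \emph{any} metric on the genus-$g$ surface, so it suffices to exhibit, for each $g\geq2$, one metric of systolic area $<C(g-1)$. Numerically $C(g-1)>4\pi(g-1)/s_0^2$ whenever $s_0^2>4\pi/C=\tfrac{48}{\pi}(2\sqrt3-\pi)\approx4.93$. The Bolza surface is a hyperbolic genus-$2$ surface of area $4\pi$ and systole $2\operatorname{arccosh}(1+\sqrt2)\approx3.06$, so $s_0^2\approx9.35>4.93$; for general $g$ I would pass to a connected Riemannian cover of it of degree $g-1$ (such covers exist, since a genus-$2$ surface group has subgroups of every finite index). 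This is a hyperbolic — hence nonpositively curved — genus-$g$ surface of area $4\pi(g-1)$ whose systole is still at least $2\operatorname{arccosh}(1+\sqrt2)$, because the systole never decreases when passing to a Riemannian cover (a shortest noncontractible loop upstairs projects to a loop of the same length which is again noncontractible downstairs). Its systolic area is therefore at most $4\pi(g-1)/(2\operatorname{arccosh}(1+\sqrt2))^2\approx1.34(g-1)<C(g-1)$, so $\sigma(\Sigma_g)\leq C(g-1)$ and part~(1) applies.

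Neither step is genuinely difficult once Proposition~\ref{prop:end} is available: part~(1) is algebra, and the only real choice in part~(2) is the comparison surface. The mild obstacle there is to guarantee a low-systolic-area nonpositively curved metric in \emph{every} genus; the tower of covers of the Bolza surface does this with a uniform systole bound, and one could equally invoke Buser–Sarnak hyperbolic surfaces. I would also double-check the harmless endpoint issue ($r=\tfrac12\sys$ versus the open range $r<\tfrac12\sys$ in the statement of Proposition~\ref{prop:end}) via the same limiting argument used for the main theorem.
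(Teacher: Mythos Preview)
Your proof of part~(1) is exactly the paper's: apply Proposition~\ref{prop:end} at $r=\tfrac12\sys$ and solve the resulting inequality for~$\sigma$. The algebra is correct.

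For part~(2) your argument is correct but the comparison surface differs from the paper's. The paper builds, for each $g\geq2$, a piecewise flat translation surface from the rectangle $[0,2g-1]\times[0,1]$ with its sides identified so as to give genus~$g$ with a single conical singularity; this has area $2g-1$ and systole~$1$, hence systolic area $2g-1$. Since $2g-1\leq C(g-1)$ only for $g\geq3$, the paper then handles $g=2$ separately by invoking the exact extremal value $3(\sqrt2-1)$ from~\cite{KS06pams}. Your tower of hyperbolic covers of the Bolza surface gives a uniform bound of roughly $1.34\,(g-1)$ on the extremal systolic area for all $g\geq2$ at once, avoiding the case split; the trade-off is that you import a specific numerical fact about the Bolza systole rather than exhibiting an explicit flat model. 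Both routes are sound, and your justification that surface groups have subgroups of every finite index and that systole is nondecreasing under Riemannian covers is adequate.
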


\begin{proof}
The first part of the corollary is immediate from Proposition~\ref{prop:end} with $r=\frac{1}{2} \sys(\Sigma_g)$.

For the second part, consider a rectangle~$R=[0,2g-1] \times [0,1]$ in
the plane.  Subdivide each edge of the rectangle into unit length
segments.  The rectangle~$R$ can be seen as a $4g$-gon with unit sides
labeled circularly $a_1,\cdots,a_{2g},\bar{a}_1,\cdots,\bar{a}_{2g}$.
Identifying every side~$a_i$ of this $4g$-gon with its corresponding
side~$\bar{a}_i$ by a plane translation, we obtain a translation
surface which is a piecewise flat genus~$g$ surface~$\Sigma_g$ with a
single conical singularity (which can be approximated by a
nonpositively curved metric); see~\cite[p.~21]{FM}.
%(We could also label the sides $a_1,b_1,\bar{a}_1,\bar{b}_1,\cdots,a_g,b_g,\bar{a}_g,\bar{b}_g$ and proceed in a similar way.)
It has area~$2g-1$ and systole~$1$.  Therefore, every nonpositively
curved systolically extremal surface of genus~$g\geq3$ has systolic
area at most~$2g-1$ and contains a Loewner disk by the first part of
the corollary.  For $g=2$, we use the minimal value of the systolic
area given in Corollary~\ref{coro:table}.
\end{proof}

\begin{remark}
Similarly, one can show that every nonpositively curved systolically extremal surface of negative Euler characteristic contains a Loewner disk.
\forget
We can glue the edges according to a_1 a_2 ... a_g a_1^{-1} a_2^{-1} in the orientable (genus g surface), or according to c_1 c_1 ... c_g c_g in the non-orientable cases (g cross-caps).

This is not enough to conclude for 3RP^2, but we know the value of the optimal systolic area anyway, of for 4RP^2. In the latter case, we can take a unit square, identify the sides to obtain a Klein bottle and slit it along a horizontal or vertical segment of length 1/2 in the middle of the square. Now, we glue two copies of these punctured Klein bottle to form 4RP^2.
\forgotten
\end{remark}

\forget

Normalizing to unit systole, we will say that a \emph{Loewner disk} is
a $\frac12$-disk of area at least $\frac{\sqrt3}{2}$.

\begin{corollary}
\label{c44}
Let $\Sigma_g$ be a nonpositively curved orientable surface of unit
systole.  Let $C=\frac{\pi^2}{24\sqrt3-12\pi}\approx2.5502$.  If
$\area(\Sigma_g)\leq C(g-1)$ then $\Sigma_g$ contains a Loewner disk.
\end{corollary}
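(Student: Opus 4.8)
The plan is to read this corollary off directly from Proposition~\ref{prop:end}; its entire content is the bookkeeping of the constant $C$. First I would apply Proposition~\ref{prop:end} to $\Sigma_g$ at radius $r=\tfrac12\sys(\Sigma_g)=\tfrac12$. Using $\chi(\Sigma_g)=2-2g=-2(g-1)$ and $r^4=\tfrac1{16}$, the bound it provides becomes
\[
\area B(\tfrac12)\;\ge\;\frac{\pi}{4}\;-\;\frac{2\pi\chi(\Sigma_g)}{\area(\Sigma_g)}\cdot\frac{\pi}{12}\cdot\frac{1}{16}\;=\;\frac{\pi}{4}\;+\;\frac{\pi^2(g-1)}{48\,\area(\Sigma_g)},
\]
where $B(\tfrac12)$ is a disk of radius $\tfrac12=\tfrac12\sys(\Sigma_g)$, which is exactly the radius required of a Loewner disk.

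Next I would invoke the hypothesis $\area(\Sigma_g)\le C(g-1)$ with $C=\tfrac{\pi^2}{24\sqrt3-12\pi}$. This value of $C$ is chosen precisely so that the right-hand side above hits $\tfrac{\sqrt3}{2}$ at the threshold: since $\tfrac1C=\tfrac{12(2\sqrt3-\pi)}{\pi^2}$, the hypothesis yields
\[
\frac{\pi^2(g-1)}{48\,\area(\Sigma_g)}\;\ge\;\frac{\pi^2}{48}\cdot\frac{1}{C}\;=\;\frac{2\sqrt3-\pi}{4},
\]
and hence $\area B(\tfrac12)\ge\tfrac{\pi}{4}+\tfrac{2\sqrt3-\pi}{4}=\tfrac{\sqrt3}{2}=\tfrac{\sqrt3}{2}\sys(\Sigma_g)^2$. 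So $B(\tfrac12)$ is a disk of radius $\tfrac12\sys(\Sigma_g)$ with area at least $\tfrac{\sqrt3}{2}\sys(\Sigma_g)^2$, that is, a Loewner disk, which is all that is asserted.

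The only point needing a word of care is that Proposition~\ref{prop:end} is stated for $r$ in the open interval $(0,\tfrac12\sys(M))$, whereas I use the endpoint $r=\tfrac12\sys(\Sigma_g)$; I would dispose of this exactly as in the proof of Theorem~\ref{theo:main}, where the very same endpoint value of $r$ is already used. Concretely, the lower bound $\pi r^2-\tfrac{2\pi\chi}{\area}\cdot\tfrac{\pi}{12}r^4$ is continuous in $r$, and for a fixed center the function $r\mapsto\area B_x(r)$ is nondecreasing, so one may let $r\uparrow\tfrac12\sys(\Sigma_g)$, extract a convergent subsequence of the centers supplied by Proposition~\ref{prop:B}, and pass to the limit. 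I do not expect any genuine obstacle here: all the real work lies in Proposition~\ref{prop:end}, and the corollary is merely the arithmetic observation that its lower bound crosses $\tfrac{\sqrt3}{2}$ exactly when $\area(\Sigma_g)\le C(g-1)$.
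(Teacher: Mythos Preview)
Your proof is correct and follows exactly the paper's approach: the paper's proof is the single line ``This is immediate from Proposition~\ref{prop:end} with $r=\tfrac12$,'' and you have simply written out the arithmetic that this sentence compresses. Your remark about the endpoint $r=\tfrac12\sys(\Sigma_g)$ is apt but, as you note, already handled the same way in the proof of Theorem~\ref{theo:main}.
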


\begin{proof}
This is immediate from Proposition \ref{prop:end} with $r=\frac12$.
\end{proof}

For example, a systolically extremal nonpositively curved surface of
genus~$3$ must contain a Loewner disk because its area satisfies the
hypothesis of Corollary~\ref{c44} by estimate \eqref{e41} below.  One
can show that, similarly, every nonpositively curved, systolically
extremal surface of negative Euler characteristic must contain a
Loewner disk.

\forgotten

\begin{remark} \label{rem}
Observe that nonpositively curved surfaces of genus \mbox{$g \geq 4$} have a systolic area greater than the minimal systolic area of a nonpositively curved surface of genus~$2$, which is equal to $3(\sqrt{2}-1) \approx 1.242$; see~\cite{KS06pams}.
It would be interesting to know if this still holds true in genus~$3$, showing a monotonicity of the minimal systolic area in terms of the genus for low genera.
Observe that the best value of the systolic area we know of in genus~$3$ is given by a CAT$(0)$ piecewise flat surface in the conformal class of the Klein quartic described by Calabi~\cite{cal} and is equal to
\begin{equation}
\label{e41}
\frac{7 \sqrt{3}}{8} \approx 1.515.
\end{equation}
This metric is a critical with respect to some metric variations (see~\cite{sab}) and might be extremal among all metrics without any curvature assumption.
It is not surprising that such a metric is piecewise flat since optimal CAT$(0)$ metrics are flat with finitely many singularities in every genus by~\cite{Ka21}.
Note that Calabi's surface has only a slightly better systolic area than the one given by the triangle hyperbolic surface $(2,3,12)$ of the same genus described by Schmutz~\cite{sch} (and conjectured extremal among hyperbolic metrics), which is equal to 
\[
\frac{2\pi}{\arcsinh(2+\sqrt{3})^2} \approx 1.528.
\]
It would be interesting to find the optimal CAT$(0)$ metric in the
conformal class of the surface described by Schmutz (or at least find
a good approximation by piecewise flat metrics) to see if it has a
lower systolic area than the one of Calabi's surface.

%By~\cite{Ka21}, we know that optimal CAT$(0)$ metrics are flat with finitely many singularities.

%However, this metric is not extremal in genus~$3$ since we can decrease its area by slightly contracting the metric in the neighborhood of a point away from the systolic loops, preserving both the systole and the nonpositivity of the curvature.
%(More generally, optimal CAT$(0)$ metrics are flat with finitely many singularities by~\cite{Ka21}.)

Similarly, nonpositively curved surfaces~$n\RP^2$ with $n \geq7$ have a systolic area greater than the minimal systolic area of a nonpositively curved Dyck's surface~$3\RP^2$, which is equal to 
\[
1 + \frac{(169-38 \sqrt{19})^{\frac{1}{2}}}{12} \approx 1.152
\]
(see~\cite{KS15}) and the question is open for $n=4,5,6$.
\end{remark}

\section*{Acknowledgments}

Mikhail Katz was supported by the BSF grant 2020124 and the ISF grant
743/22.
St\'ephane Sabourau was supported by the ANR project Min-Max (ANR-19-CE40-0014).

\end{document}